\renewcommand{\maketag@@@}[1]{\hbox{\m@th\normalsize\normalfont#1}}%
\tikzstyle{block} = [draw, draw=black, line width = 1pt, rectangle,
  \newcommand{\MPC}[1]{\mathcal{P}(#1)}
  \newcommand{\xh}[1]{\hat x_{#1}}
  \newcommand{\uh}[1]{\hat u_{#1}}
  \newcommand{\nx}{n_x}
  \newcommand{\nuu}{n_u}
  \newcommand{\p}{p}
  \newcommand{\Ph}[1]{\hat P_{#1}}
  \newcommand{\Kh}[1]{\hat K_{#1}}
  \newcommand{\Qxh}[1]{\hat Q_{x,#1}}
  \newcommand{\Quh}[1]{\hat Q_{u,#1}}
  \newcommand{\Ah}[1]{\hat A_{#1}}
  \newcommand{\Bh}[1]{\hat B_{#1}}
  \newcommand{\Qh}[1]{\hat Q_{#1}}
  \newcommand{\Vh}[1]{\hat V_{#1}}
    \newcommand{\MPCsub}[2]{\mathcal{P}_{#1}^{#2}(N_{#1}^{#2})}
  \newcommand{\uu}{u}
  \newcommand{\ub}[1]{\bar u_{#1}}
  \newcommand{\thh}{\bar t}
  \newcommand{\Vb}[1]{\bar V_{#1}}
  \newcommand{\Sc}{\mathsf{S}}
  \newcommand{\Fb}[1]{\bar F_{#1}}
  \newcommand{\Gb}[1]{\mathsf{\bar G_{#1}}}
  \newcommand{\Hb}[1]{\mathsf{\bar H_{#1}}}
  \newcommand{\Kb}[1]{\mathsf{\bar K_{#1}}}
  \newcommand{\Vk}[1]{C_{#1}}
  \newcommand{\Qb}[1]{\mathsf{\bar Q_{#1}}}
  \newcommand{\Mm}[1]{M_{#1}}
  \newcommand{\nr}{n_{\hat u}}
  \newcommand{\Gh}[1]{\hat G_{#1}}
  \newcommand{\Fh}[1]{\hat F_{#1}}
  \newcommand{\Hh}[1]{\hat H_{#1}}
\title{\LARGE \bf A Parallel Riccati Factorization Algorithm\\ with Applications to Model Predictive Control}
\author{Isak Nielsen and Daniel Axehill%
  \thanks{I. Nielsen and D. Axehill are with the Division of Automatic
    Control, Linköping University, SE-58183 Linköping, Sweden, \texttt{isak.nielsen@liu.se, daniel@isy.liu.se}.}}
\begin{document}

%\rtFrontmatter

\begin{center}
\huge {A Parallel Riccati Factorization Algorithm\\ with Applications to Model Predictive Control} \\ \vspace{2mm}
\large Isak Nielsen, Daniel Axehill \\ \normalsize (Division of Automatic Control, Link\"oping University, Sweden (e-mail: \{isak.nielsen@liu.se, daniel@isy.liu.se\})
\end{center}

\textbf{Abstract}
Model Predictive Control (MPC) is increasing in popularity in industry as more efficient algorithms for solving the related optimization problem are developed. The main computational bottle-neck in on-line MPC is often the computation of the search step direction, \ie the Newton step, which is often done using generic sparsity exploiting algorithms or Riccati recursions. However, as parallel hardware is becoming increasingly popular the demand for efficient parallel algorithms for solving the Newton step is increasing. In this paper a tailored, non-iterative parallel algorithm for computing the Riccati factorization is presented. The algorithm exploits the special structure in the MPC problem, and when sufficiently many processing units are available, the complexity of the algorithm scales logarithmically in the prediction horizon. Computing the Newton step is the main computational bottle-neck in many MPC algorithms and the algorithm can significantly reduce the computation cost for popular state-of-the-art MPC algorithms.

\vspace{3mm}

\textbf{Keywords}
Model Predictive Control, Parallel Computation, Optimization, Riccati factorization

%%%%%%%%%%%%%%%%    INTRODUCTION    %%%%%%%%%%%%%%%%%%
\section{Introduction}
\label{sec:intro}
One of the most widely used control strategies in industry today is Model Predictive Control (MPC). Some important reasons for its success
include that it can handle multi-variable systems and constraints on
control signals and state variables in a structured way~\cite{maciejowski2002predictive}. Each
sample of the MPC control loop consists of solving an optimization problem on-line, which requires efficient optimization algorithms.
However, similar linear algebra is also useful
off-line in explicit MPC solvers, where the optimal feedback is pre-computed. Depending on the type of system
and problem formulation, the optimization
problem can be of different types, where the most common variants are
linear MPC, nonlinear MPC and hybrid MPC. In most cases, the effort
spent in the optimization problems boils down to solving
Newton-system-like equations, which has led to that much focus in research has been spent on solving this type of system of equations efficiently when
it has the special form from MPC, see \eg\cite{jonson83:thesis,rao98:_applic_inter_point_method_model_predic_contr,hansson00:_primal_dual_inter_point_method,bartlett02:_quadr,vandenberghe02:_robus_full,akerblad04:_effic,axehill06:_mixed_integ_dual_quadr_progr_tail_mpc,axevanhan07:_relax_applic_mipc_compa_and_eff_compu,axehill08:thesis,axehill08:_dual_gradien_projec_quadr_progr,diehl09:_nonlin_model_predic_contr,nielsen13low_rank_updates}. 

In recent years, the demand for efficient parallel algorithms for solving the MPC problem has increased, and much effort in research has been spent on this topic~\cite{constantinides2009tutorial}. In \cite{soudbakhsh2013parallelized} an extended Parallel Cyclic Reduction algorithm is used to reduce the computation to smaller systems of equations that are solved in parallel. The computational complexity of this algorithm is reported to be $\Ordo{\log N}$, where $N$ is the prediction horizon. In \cite{laird2011parallel}, \cite{ZhuParallelNP} and \cite{reuterswardLic} a time-splitting approach to split the prediction horizon into blocks is adopted. The subproblems in the blocks are connected through common variables and are solved in parallel using Schur complements. The common variables are computed via a consensus step where a dense system of equations involving all common variables has to be solved sequentially.
In~\cite{o2012splitting} a splitting method based on Alternating Direction Method of Multipliers (ADMM) is used, where some steps of the algorithm can be computed in parallel. In \cite{stathopoulos2013hierarchical} an iterative three-set splitting QP solver is developed. In this method the prediction horizon is split into smaller sub problems that are in turn split into three simpler problems. All these can be computed in parallel and a consensus step using ADMM is performed to achieve the final solution. In~\cite{nielsen14_parallel_mpc_arxive} the first tailored algorithm for solving the Newton step in parallel for MPC  is presented. In that work several subproblems are solved parametrically in parallel by introducing terminal constraints on the final state in each subproblem. However, the structure in the subproblems are not exploited when the subproblems are solved.

The main contribution in this paper is the introduction of theory and algorithms for solving the Riccati recursion in parallel. The new algorithms are tailored for MPC problems and fully exploit the special structure of the KKT system for such problems. The classical serial Riccati method exploits the causality of the problem and for that reason it is not obvious that it can be split and parallelized in time, especially without involving some form of iterative consensus step. In this paper, it is shown that it in fact can be performed, and how it can be performed.
The main idea is to exploit the problem structure in time and divide the original MPC problem in smaller subproblems along the prediction horizon. The subproblems are condensed in parallel using Riccati recursions to create a new MPC problem of smaller size, \ie, with shorter prediction horizon and fewer control signals. This new MPC problem is solved, and the information that is needed to solve the subproblems independently is computed. Finally, all subproblems are solved independently in parallel. Hence, the Riccati recursion for the original problem has been performed in parallel.

In this article, $\posdefmats^n$ ($\possemidefmats^n$) denotes
symmetric positive (semi) definite matrices with $n$ columns. Furthermore, let $\intnums$
be the set of integers, and $\intset{i}{j} = \braces{i,i+1,\hdots,j}$. Symbols in sans-serif font (\eg $\timestack{x}$) denote vectors or matrices of stacked element, $I$ denotes the identity matrix of appropriate dimension, and the product operator is defined as
\begin{equation}
\prod_{t=t_1}^{t_2} A_t = \begin{cases} 
A_{t_2} \cdots A_{t_1}, \; t_1 \leq t_2 \\
I, \; t_2 > t_1.
\end{cases}
\end{equation}

The paper is organized as follows. In Section~\ref{sec:prob_form} the problem description is formulated and Section~\ref{sec:ric_rec} presents the algorithms for solving this problem using the serial Riccati recursion. In Section~\ref{sec:prob_red} the original problem is split into smaller subproblems, and it is also shown how to reduce these into a smaller MPC problem. Section~\ref{sec:par_ric} presents the parallel Riccati recursion, the algorithms and the numerical results for the implemented algorithms. Finally, Section~\ref{sec:conclusion} concludes the paper.

%\begin{definition}
%\label{def:LICQ}
%For a set of linear constraints $A x = b$, the linear independence constraint qualification (LICQ) holds if the constraint gradients are linearly independent, \ie if $A$ has full row rank. When LICQ is violated it is referred to as primal degeneracy. 
%\end{definition}

%%%%%%%%%%%%        PROBLEM FORMULATION           %%%%%%%%%%%%%%%%%%

\section{Problem Formulation}
\label{sec:prob_form}
In this work linear MPC problems are considered, where the optimization problem that is solved in each sample is a convex quadratic program (QP) problem in the form

%\begin{equation}
%  \label{eq:min_problem}
% \minimize{
%    &\sum^{N-1}_{t=0}\big(\frac{1}{2}\begin{bmatrix}
%    x_t^T & u_t^T
%    \end{bmatrix} H_t\begin{bmatrix}
%    x_t \\  u_t
%\end{bmatrix} +  f_t^T \begin{bmatrix}
%x_t \\  u_t
%\end{bmatrix}  +  c_t   \big) \\
%     &+\frac{1}{2}x^T_NH_{N}x_N + f_N^T x_N + c_N}
%  {\timestack{x},\timestack{u}}
%  {&x_0 = \bar x \\
%    &x_{t+1} = A_tx_t + B_t u_t +  a_t, \; t \in \intset{0}{N-1} \\
%    & u_t \in \mathcal{U}_t, \; t \in \intset{0}{N-1} \\
%    &x_t \in \mathcal{X}_t, \; t \in \intset{0}{N}}
%\end{equation}
\begin{equation}
  \label{eq:min_problem}
 \minimize{
    &\frac{1}{2}\sum^{N-1}_{t=0}\begin{bmatrix}
    x_t^T & u_t^T
    \end{bmatrix} Q_t\begin{bmatrix}
    x_t \\  u_t
\end{bmatrix} 
     +\frac{1}{2}x^T_NQ_{x,N}x_N}
  {\timestack{x},\timestack{u}}
  {&x_0 = \bar x_0 \\
    &x_{t+1} = A_tx_t + B_t u_t, \; t \in \intset{0}{N-1} \\
    & u_t \in \mathcal{U}_t, \; t \in \intset{0}{N-1} \\
    &x_t \in \mathcal{X}_t, \; t \in \intset{0}{N}.}
\end{equation}
The equality constraints represent the dynamics equations of the system and $\mathcal{U}_t$ and $\mathcal{X}_t$ are the sets of feasible control signals and states, respectively. Let the following assumptions hold for all $t$
\begin{assumption}
\begin{equation}
Q_t = \begin{bmatrix}
Q_{x,t} & Q_{xu,t} \\ Q_{xu,t}^T & Q_{u,t}
\end{bmatrix} \in \possemidefmats^{n_x+n_u}, \; Q_{u,t} \in \posdefmats^{n_u}, \; Q_{x,N} \in \possemidefmats^{n_x}
\end{equation}
\end{assumption}
\begin{assumption}
 $\mathcal{X}_t = \mathbb{R}^{n_x}$ and $\mathcal{U}_t$ consists of constraints of the form $ 0 \leq  u_t$, \ie lower bounds on the control signal. \label{ass:X_U}
\end{assumption}
\begin{remark}
The theory presented in this paper can be used to solve more general MPC problems with linear penalty terms, affine dynamics and/or more general constraints on the control signals and constraints. The problem formulation~\eqref{eq:min_problem} and the constraints in Assumption~\ref{ass:X_U} have been chosen for notational brevity.
\end{remark}

There exists different methods for solving an MPC problem on the form~\eqref{eq:min_problem}, see \eg~\cite{nocedal06:num_opt}, where two common methods are interior-point (IP) methods and active-set (AS) methods. In IP methods, the inequality constraint functions are approximated with barrier functions, whereas the AS methods iteratively changes the set of inequality constraints that hold with equality until the optimal active set has been found. The main computational effort in both types is spent while solving Newton-system-like equations that often corresponds to an equality constrained MPC problem with prediction horizon $N$ (or to a problem with similar structure). Note that this problem is also an important part of non-linear MPC algorithms as well as hybrid MPC algorithms. In this paper the equality constrained MPC problem will be denoted $\MPC{N}$, and has the structure
\begin{equation}
 \label{eq:org_eqc_problem} \minimize{
    &\frac{1}{2}\sum^{N-1}_{t=0}\begin{bmatrix}
    x_t^T & \uu_t^T
    \end{bmatrix} Q_t\begin{bmatrix}
    x_t \\ \uu_t
\end{bmatrix} 
     +\frac{1}{2}x^T_NQ_{x,N}x_N}
  {\timestack{x},\timestack{u}}
  {&x_0 = \bar x_0 \\
    &x_{t+1} = A_tx_t + B_t\uu_t, \; t \in \intset{0}{N-1}.}
\end{equation}
This problem is obtained from~\eqref{eq:min_problem} by fixing some of the inequality constraints as in an AS method, and disregarding the rest of the inequality constraints. The control signals that are fixed to zero (the corresponding inequality constraints are fixed) are removed from the problem. 
\begin{remark}
The matrices in~\eqref{eq:org_eqc_problem} might have different dimensions than in~\eqref{eq:min_problem}. For the remaining part of the paper, problems of the form~\eqref{eq:org_eqc_problem} are considered.
\end{remark}

%%%%%%%%%%%%%%%%%%%%%%     STANDARD RICCATI RECURSION    %%%%%%%%%%%%%%%%%%%%%

\section{Standard Riccati Recursion}
\label{sec:ric_rec}
The solution to the equality constrained MPC problem~\eqref{eq:org_eqc_problem} is computed by solving the set of linear equations given by the associated KKT system. For this problem structure, the KKT system has a very special form that is almost block diagonal and can be factored efficiently using a Riccati factorization that can be computed using Riccati recursions. The Riccati factorization is used to factor the KKT coefficient matrix, followed by forward recursions to compute the primal and dual variables. Using Riccati recursions to solve the KKT system reduces the computational complexity from roughly $\Ordo{N^2}-\Ordo{N^3}$ to $\Ordo{N}$. For more background information on Riccati factorizations, see, e.g.,~\cite{jonson83:thesis},~\cite{rao98:_applic_inter_point_method_model_predic_contr} or~\cite{axehill08:thesis}.

Let the matrices $F_t,P_t,G_t,Q_{x,t}\in\Sm{n_x}_{+}$, $Q_{u,t}\in\Sm{n_u}_{++}$, ${H_t,Q_{x\uu,t}\in\Rm{n_x\times n_u}}$ and ${K_t\in\Rm{n_u \times
  n_x}}$. The Riccati factorization is then given by Algorithm~\ref{alg:factorization} and the forward recursions are given by Algorithm~\ref{alg:fwd_rec}-\ref{alg:fwd_rec_dual},~\cite{axehill08:thesis}. In Algorithm~\ref{alg:fwd_rec_dual} the dual variables corresponding to fixed inequality constraints are computed. Here $B_{v,t}$, $Q_{xv,t}$ and $Q_{\uu v,t}$ represent the parts of the respective matrices that correspond to the fixed control signals.
\begin{algorithm}[h!]
  \caption{Riccati factorization} \label{alg:factorization}
  \begin{algorithmic}[1]
    \STATE $P_N := Q_{x,N}$
    \FOR{$t=N-1,\ldots,0$}
    \STATE $F_{t+1} := Q_{x,t} + A^T_tP_{t+1}A_t$\label{alg:factorization:line:F} \\
    \STATE $G_{t+1} := Q_{\uu,t} + B^T_tP_{t+1}B_t$ \label{alg:factorization:line:G}\\
    \STATE $H_{t+1} := Q_{x\uu,t} + A^T_tP_{t+1}B_t$\label{alg:factorization:line:H} \\
    \STATE Compute and store a factorization of $G_{t+1}$.\label{alg:factorization:line:factorize_G}
    \STATE Compute a solution $K_{t+1}$ to \\ 
    $G_{t+1}K_{t+1} = -H^T_{t+1}$ \\
    \STATE $P_{t} := F_{t+1} - K^T_{t+1}G_{t+1}K_{t+1}$\label{alg:factorization:line:P}
    \ENDFOR
  \end{algorithmic}
\end{algorithm}

%
%\begin{algorithm}[tbp!]
%  \caption{Backward recursion}
%  \label{drmigpqp:alg:ric_rec_bw_rec}
%  \begin{algorithmic}[1]
%    \STATE $\Psi(N) = -\lin_{x}(N)$
%    \FOR{$\tau = N-1,\hdots,0$}
%    \STATE $u_0(\tau+1) = G^{-1}(\tau+1)\parens{B_w^T(\tau)\Psi(\tau+1) -
%        \lin_{w}(\tau)}$
%    \STATE $\Psi(\tau) = A^T(\tau)\Psi(\tau+1)
%    - H(\tau+1)u_0(\tau+1) - \lin_{x}(\tau)$
%    \ENDFOR
%  \end{algorithmic}
%\end{algorithm}
\begin{algorithm}[htbp!]
  \caption{Forward recursion}
  \label{alg:fwd_rec}
  \begin{algorithmic}[1]
  	\STATE $x_0 = \bar x_0$
    \FOR{$t = 0,\hdots,N-1$}
%    \STATE $w(\tau) = u_0(\tau+1) + K(\tau+1)x(\tau)$
	\STATE $\uu_t = K_{t+1}x_t$
    \STATE $x_{t+1} = A_tx_t +
    B_t \uu_t$
%    \STATE $\lambda_t = P_tx_t - \Psi_t$
    \STATE $\lambda_t = P_tx_t$
    \ENDFOR
%    \STATE $\lambda(N) = P(N)x(N) -\Psi(N)$
    \STATE $\lambda_N = P_Nx_N$
  \end{algorithmic}
\end{algorithm}
\begin{algorithm}[htbp!]
  \caption{Forward recursion (Dual variables)}
  \label{alg:fwd_rec_dual}
  \begin{algorithmic}[1]
    \FOR{$t = 0,\hdots,N-1$}
%    \STATE $\mu(\tau) = Q_{xv}^T(\tau)x(\tau) +
%    B_v^T(\tau)\lambda(\tau+1) + Q^T_{wv}(\tau)w(\tau)
%    + \lin_v(\tau)$
    \STATE $\mu_t = Q_{xv,t}^Tx_t +
    B_{v,t}^T\lambda_{t+1} + Q^T_{\uu v,t}u_t$
    \ENDFOR
  \end{algorithmic}
\end{algorithm}

%%%%%%%%%%%%%%%%%%%%%%%%%%%%%        PROBLEM DECOMPOSITION    %%%%%%%%%%%%%%%%%%%%%%%%%%%%

\section{Problem decomposition and reduction}
\label{sec:prob_red}
In Section~\ref{sec:ric_rec} serial algorithms for solving the MPC problem~\eqref{eq:org_eqc_problem} in $\Ordo{N}$ complexity using the Riccati recursion were presented. This section will introduce new theory to compute the Riccati recursion in parallel directly (non-iteratively) on several processing units with $\Ordo{\log N}$ complexity. To do this, the main idea is to divide the original problem into several smaller subproblems along the prediction horizon, see Fig.~\ref{fig:sub_prob_connection}. It will be shown that each subproblem $i$ can be solved independently of the others, provided that the initial value and $x_{0,i}=x_{\tilde N_{i-1}}$ and $P_{N_i,i} = P_{\tilde N_i}$ are known to the subproblem (for now, it is enough to realize that $P_{N_i,i}$ transfers information backwards in time from subproblem $i+1$ to $i$). To compute $x_{0,i}$ and $P_{N_i,i}$, the subproblems are individually condensed using the Riccati recursion, and combined into an MPC problem of smaller size (\ie shorter prediction horizon and lower control signal dimension). Solving this new smaller MPC problem with the Riccati recursion computes $P_{N_i,i}$ and $x_{0,i}$. When these are known, the subproblems can be solved independently in parallel.

The main focus of this section will be how to split the original problem~\eqref{eq:org_eqc_problem} in time into several smaller subproblems (Section~\ref{subsec:sub_prob}), how to condense the subproblems efficiently using the Riccati recursion (Section~\ref{subsec:red_sub_prob}) and how to form the reduced MPC problem (Section~\ref{subsec:red_mpc_prob}).

%The main focus in this section is how to split the original problem $\MPC{N}$ into smaller sub problems, and how to efficiently solve these separately using the Riccati recursion. In Sections~\ref{subsec:sub_prob} and~\ref{subsec:red_sub_prob} each sub problem is reduced to depend only on two variables $\xh{}$ and $\uh{}$, each of size $\nx$. Putting all sub problems together renders a new reduced MPC problem of smaller size than~\eqref{eq:org_eqc_problem}. The construction of this reduced system is considered in Section~\ref{subsec:red_mpc_prob}.

%%%%%%%%%%    Subproblems   %%%%%%%%

\subsection{Splitting the MPC problem into subproblems}
\label{subsec:sub_prob}
By examining the Riccati factorization given by Algorithm~\ref{alg:factorization} and the forward recursion given by Algorithm~\ref{alg:fwd_rec}, it is clear that $P_t$ transfers information backwards in time, and the state $x_{t}$ transfers information forward in time. Hence, the problem can be split in time into smaller batches that exchange information with the adjacent batches via $P_{t}$ and $x_{t}$ at the end points, see Fig.~\ref{fig:sub_prob_connection}. 
\begin{figure}
\centering
\def\svgwidth{0.95\columnwidth}
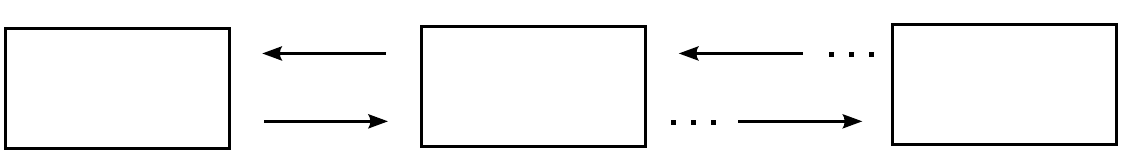
\caption{The MPC problem can be divided into smaller batches (in time), where the batches are exchanging information via $P_t$ and $x_t$ at the division points.}
\label{fig:sub_prob_connection}
\end{figure}

To decompose the problem, let the prediction horizon be split such that $\timestack{x}$ and $\timestack{u}$ are divided into $\p+1$ batches
\begin{align}
&\begin{bmatrix}
x_0^T&\cdots &x_{\tilde N_0}^T
\end{bmatrix}^T, \ldots, \begin{bmatrix}
x_{\tilde N_{\p-1}}^T &\cdots &x_{\tilde N_\p}^T
\end{bmatrix} ^T, \\
&\begin{bmatrix}
\uu_0^T &\cdots & u_{\tilde N_0-1}^T
\end{bmatrix}^T,  \ldots, \begin{bmatrix}
\uu_{\tilde N_{\p-1}}^T &\cdots &\uu_{\tilde N_\p-1}^T
\end{bmatrix} ^T.
\end{align}
Note that the last state $x_{\tilde N_i}$ in batch $i$ is the same as the first state in batch $i+1$. Now introduce the batch-wise variables
\begin{align}
\timestack{x_i} &\triangleq \begin{bmatrix}
x_{0,i}^T & \cdots & x_{N_i,i}^T
\end{bmatrix}^T = \begin{bmatrix}
x_{\tilde N_{i-1}}^T & \cdots & x_{\tilde N_i}^T
\end{bmatrix}^T \\
\timestack{u_i} &\triangleq \begin{bmatrix}
\uu_{0,i}^T & \cdots & \uu_{N_i-1,i}^T
\end{bmatrix}^T = \begin{bmatrix}
\uu_{\tilde N_{i-1}}^T & \cdots & \uu_{\tilde N_i-1}^T
\end{bmatrix}^T,
\end{align}
for $i \in \intset{0}{p}$, where $N_i$ is the length of batch $i$.
%,
%An optimization problem that is equivalent to~\eqref{eq:org_eqc_problem} can then be cast as
%
%\begin{equation}
%\label{eq:org_eqc_problem_expanded}
%\minimize{\sum_{i=0}^p &\sum_{t=0}^{N_{i}-1} \big(\frac{1}{2}\begin{bmatrix}
%    x_{t,i}^T & u_{t,i}^T
%    \end{bmatrix}Q_{t,i}\begin{bmatrix}
%    x_{t,i} \\ u_{t,i}
%\end{bmatrix}\big)\\&+\frac{1}{2}x_{N_{p},p}^TH_{N_p,p}x_{N_{p},p}}{\timestack{x},\timestack{u}}{&\xh{0} = \bar x \\ &\forall \; i \in \intset{0}{p} \begin{cases}
%x_{0,i} = \xh{i} \\
%x_{t+1,i} = A_{t,i}x_{t,i} + B_{t,i}u_{t,i}, \\ \hspace{8em} t \in \intset{0}{N_{i}-1} \end{cases}\\
%%x_{,p} &= \xbar{p} \\
%%x_{t+1,p} &= A_{t,p}x_{t,p}+B_{t,p}u_{t,p} + a_{t,p}, \; t \in \intset{0}{N_{p}-1}\\
%&\xh{i+1} = \Ah{i} \xh{i} + \Bh{i} \uh{i}, i \in \intset{0}{\p-1}}
%\end{equation}
%
By inspection of Fig.~\ref{fig:sub_prob_connection}, the original problem can then be decomposed into $\p+1$ smaller MPC problems $i\in \intset{0}{p}$, with initial value $\xh{i}$ and terminal state cost $P_{N_i,i}$, on the form
\begin{equation}
\minimize{\frac{1}{2}&\sum_{t=0}^{N_i-1}\begin{bmatrix}
    x_{t,i}^T & \uu_{t,i}^T
    \end{bmatrix} Q_{t,i}\begin{bmatrix}
    x_{t,i} \\  \uu_{t,i}
\end{bmatrix}\\&+\frac{1}{2}x_{N_i,i}^TP_{N_i,i}x_{N_i,i}}{\timestack{x}_i,\timestack{u}_i}{x_{0,i} &= \xh{i} \\ x_{t+1,i}&=A_{t,i}x_{t,i}+B_{t,i}\uu_{t,i}, \; t\in \intset{0}{N_i-1}.} \label{eq:sub_problem_dscr}
\end{equation}
Note that for the final batch the terminal constraint is $P_{N_p,p}=Q_{x,N}$. 
%
%For the final batch $i=p$ the objective function is instead
%\begin{equation}
%\frac{1}{2}\sum_{t=0}^{N_\p-1}\begin{bmatrix}
%    x_{t,\p}^T & \uu_{t,\p}^T
%    \end{bmatrix} Q_{t,\p}\begin{bmatrix}
%    x_{t,\p} \\  \uu_{t,\p}
%\end{bmatrix} + \frac{1}{2}x_{N_\p,\p}^T Q_{x,N_\p,\p}x_{N_\p,\p}.
%\end{equation}
Provided that the optimal value of $P_{N_i,i}$ (\ie $P_N$ in Algorithm~\ref{alg:factorization} for batch $i$) and $\xh{i}$ are known, these individual subproblems can be solved completely independently of each other using $\p+1$ Riccati recursions. 

%%%%%   Preliminary solution

\subsection{Reducing the size of a subproblem}
\label{subsec:red_sub_prob}
Even when $P_{N_i,i}$ is not known, it is possible to work on the subproblems individually to reduce their sizes. This can be done separately for the $\p+1$ sub problems, which opens up for a structure which can be solved in parallel. The core idea with this approach is that the unknown $P_{N_i,i}$ will indeed influence the solution of the subproblem, but the degree of freedom is often very limited compared to the dimension of the full control signal vector. It will be shown in this section that the structured perturbation from $P_{N_i,i}$ only introduces $\nr \leq \nx$ degrees of freedom, and hence that the subproblem can be reduced to depend only on the initial state $\xh{i}$ and the freedom in the structured perturbation, of dimension $\nx$ and $\nr$ respectively. Furthermore, it will be shown how the reduced subproblems can be combined into a new MPC problem $\MPC{\p}$ of smaller size, \ie with $p < N$ and lower control signal dimension. This is summarized in Theorem~\ref{thm:reduce_subprob}, and the proof of this theorem is partly based on Lemma~\ref{lem:F_G_H_cost} where an expression for the cost-to-go at time $\bar t$  as a function of $x_{\bar t}$ and $P_{\bar t}$ is presented. For notational brevity, the subindices $i$ in~\eqref{eq:sub_problem_dscr} are omitted in Lemma~\ref{lem:F_G_H_cost} and Theorem~\ref{thm:reduce_subprob} and their proofs.

%%% LEMMA
\begin{lemma}
\label{lem:F_G_H_cost}
Assume that $\uu_{t}=K_{t+1}x_{t} + \ub{t}$ for $t \in \intset{\bar t}{N-1}$, where $\bar u_t \in \mathbb{R}^{\nuu}$ is an arbitrary vector. Then the cost-to-go at $\bar t$ for the problem~\eqref{eq:sub_problem_dscr} with $P_{N}=0$ is
\begin{equation}
\Vb{} \left(x_{\bar t},\timestack{\bar \uu}\right)\triangleq \frac{1}{2}x_{\bar t}^T P_{\bar t} x_{\bar t} + \frac{1}{2}\sum_{t=\bar t}^{N-1} \ub{t}^T G_{t+1} \ub{t}, \label{eq:lem:F_G_H_cost:cost}
\end{equation}
where $P_{\bar t}$, $G_{t+1}$ and $K_{t+1}$ are computed in the Riccati factorization in Algorithm~\ref{alg:factorization}.
\end{lemma}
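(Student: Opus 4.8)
The plan is to proceed by backward induction on the time index, from $t=N$ down to $t=\thh$, showing that the cost accumulated from stage $t$ onward along the closed-loop trajectory generated by $\uu_s = K_{s+1}x_s + \ub{s}$ has exactly the claimed form. Concretely, I would define the partial cost-to-go at stage $t$ as the sum of the stage costs from $t$ to $N-1$ together with the terminal term $\frac{1}{2}x_N^TP_Nx_N$, and take as induction hypothesis that this equals $\frac{1}{2}x_{t+1}^T P_{t+1} x_{t+1} + \frac{1}{2}\sum_{s=t+1}^{N-1}\ub{s}^T G_{s+1}\ub{s}$ at stage $t+1$. The base case $t=N$ is immediate: both sides reduce to $\frac{1}{2}x_N^T P_N x_N$, which vanishes since $P_N=0$ and the sum is empty.

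The inductive step is where the real computation lives, and I would carry it out in two stages. First, I add the stage cost $\frac{1}{2}\begin{bmatrix} x_t^T & \uu_t^T\end{bmatrix} Q_t \begin{bmatrix} x_t \\ \uu_t \end{bmatrix}$ to the hypothesis at $t+1$ and substitute the dynamics $x_{t+1} = A_t x_t + B_t \uu_t$ into $\frac{1}{2}x_{t+1}^T P_{t+1} x_{t+1}$. Collecting the resulting quadratic form in $(x_t,\uu_t)$ and matching the blocks against lines~\ref{alg:factorization:line:F}--\ref{alg:factorization:line:H} of Algorithm~\ref{alg:factorization} rewrites it as $\frac{1}{2}x_t^T F_{t+1} x_t + x_t^T H_{t+1}\uu_t + \frac{1}{2}\uu_t^T G_{t+1}\uu_t$, while the perturbation sum from $t+1$ to $N-1$ is carried along unchanged.

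Second, I substitute the prescribed feedback $\uu_t = K_{t+1}x_t + \ub{t}$ and simplify using the defining relation $G_{t+1}K_{t+1} = -H_{t+1}^T$ from Algorithm~\ref{alg:factorization}, equivalently $K_{t+1}^T G_{t+1} = -H_{t+1}$. The quadratic-in-$x_t$ contributions then combine, via $K_{t+1}^T G_{t+1} K_{t+1} = -H_{t+1}K_{t+1}$, into $\frac{1}{2}x_t^T\bigl(F_{t+1} - K_{t+1}^T G_{t+1} K_{t+1}\bigr)x_t = \frac{1}{2}x_t^T P_t x_t$ by line~\ref{alg:factorization:line:P}; the cross terms cancel exactly since $x_t^T H_{t+1}\ub{t} + x_t^T K_{t+1}^T G_{t+1}\ub{t} = x_t^T(H_{t+1}-H_{t+1})\ub{t}=0$; and the remaining term $\frac{1}{2}\ub{t}^T G_{t+1}\ub{t}$ extends the sum to start at $t$. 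This reproduces the hypothesis at stage $t$ and closes the induction, giving~\eqref{eq:lem:F_G_H_cost:cost} at $t=\thh$.

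I expect the main obstacle to be bookkeeping rather than conceptual: one must track the two distinct appearances of $K_{t+1}^T G_{t+1}K_{t+1}$ -- one arising from completing the square in $\uu_t$ and one entering through the definition of $P_t$ -- so that their combination and the cross-term cancellation are transparent. The invertibility of $G_{t+1}$, which makes $K_{t+1}$ well defined, is guaranteed by $Q_{\uu,t}\in\posdefmats^{\nuu}$ together with the backward-propagated $P_{t+1}\succeq 0$; I would note that this positive semidefiniteness is needed only to keep the recursion well posed and plays no role in the algebraic identity itself.
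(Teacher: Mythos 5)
Your proposal is correct and follows essentially the same route as the paper's own proof: a backward induction that inserts the dynamics to obtain the $F_{t+1}$, $H_{t+1}$, $G_{t+1}$ quadratic form, then applies the feedback law and the relation $G_{t+1}K_{t+1}=-H_{t+1}^T$ to collapse the quadratic-in-$x_t$ terms into $P_t$ and cancel the cross terms. The only cosmetic difference is that you anchor the base case at $t=N$ (empty sum, $P_N=0$) while the paper states it at $t=N-1$; the algebra is identical.
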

\begin{proof}
\label{proof:lem:F_G_H_cost}
For the proof of Lemma~\ref{lem:F_G_H_cost}, see Appendix~\ref{app:proof:lem:F_G_H_cost}.
\end{proof}

%%%%% THEOREM 

\begin{theorem}
\label{thm:reduce_subprob}
An MPC problem given on the form~\eqref{eq:sub_problem_dscr} with unknown $P_{N}$ can be reduced into a smaller MPC problem in $\xh{} \in \mathbb{R}^{\nx}$ and $\uh{} \in \mathbb{R}^{\nr}$, with $\nr \leq \nx$ using the Riccati factorization.
%
%This is done by introducing the control signals $u_{t,i} = K_{t+1,i}x_{t,i} + \Kh{i}\uh{i}$, with $K_{t+1,i}$ from the Riccati factorization and $\Kh{i}$ defined by~\eqref{eq:Kh_definition}. 
%
The reduced problem has the cost function
\begin{equation}
\Vh{}\left( \xh{}, \uh{} \right) = \frac{1}{2}\xh{}^T \Qh{x} \xh{} + \frac{1}{2}\uh{}^T \Qh{\uu} \uh{},
\end{equation}
and the dynamics from the initial state to the final state in the batch are given by
\begin{align}
x_{N} &= \Ah{}\xh{} + \Bh{} \uh{},
\end{align}
where $\Qh{x}$, $\Qh{\uu}$, $\Ah{}$ and $\Bh{}$ are given by~\eqref{eq:Qxh_definition},~\eqref{eq:Ah_def} and~\eqref{eq:Bh_def}.
%\begin{align}
%\Qxh{i} &= P_{0,i}, \quad 
%\Quh{i} = \sum_{t=0}^{N_i-1}\Kh{t+1,i}G_{t+1,i}\Kh{t+1,i} \\
%\Ah{i} &= \prod_{t=0}^{N_i-1}\left( A_{t,i}+B_{t,i}K_{t+1,i} \right) \label{eq:Ah_def}\\
%\Bh{i} &= \sum_{t=1}^{N_i} \prod_{k=t}^{N_i-1}\left( A_{k,i}+B_{k,i}K_{k+1,i} \right)B_{t-1,i}\Kh{t,i}. \label{eq:Bh_def}
%\end{align}

\end{theorem}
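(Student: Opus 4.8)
The plan is to run the factorization of Algorithm~\ref{alg:factorization} on the batch with its terminal cost switched off (\ie with $P_N=0$), and then to account for the \emph{unknown} $P_N$ separately through the only quantity it couples to, the terminal state $x_N$. First I would apply Lemma~\ref{lem:F_G_H_cost} at $\bar t=0$ with the parametrization $\uu_t=K_{t+1}x_t+\ub{t}$, $t\in\intset{0}{N-1}$, which expresses the batch cost for $P_N=0$ in the decoupled form
\begin{equation}
\Vb{}\left(\xh{},\timestack{\bar \uu}\right)=\frac12\xh{}^TP_0\xh{}+\frac12\sum_{t=0}^{N-1}\ub{t}^TG_{t+1}\ub{t},
\end{equation}
with $x_0=\xh{}$ and $P_0$, $G_{t+1}$, $K_{t+1}$ produced by the factorization. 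The structural point I would stress is that this cost carries \emph{no} cross term between $\xh{}$ and the perturbations $\ub{t}$: the feedback $K_{t+1}$ has absorbed all the coupling, and each $G_{t+1}=Q_{\uu,t}+B_t^TP_{t+1}B_t\in\posdefmats^{\nuu}$ is positive definite.

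Next I would propagate the closed-loop dynamics: substituting the parametrization into $x_{t+1}=A_tx_t+B_t\uu_t$ gives $x_{t+1}=\Phi_tx_t+B_t\ub{t}$ with $\Phi_t\triangleq A_t+B_tK_{t+1}$, so that
\begin{equation}
x_N=\Ah{}\,\xh{}+\Psi\timestack{\bar \uu},\quad\Ah{}\triangleq\prod_{t=0}^{N-1}\Phi_t,\quad\Psi\triangleq\begin{bmatrix}\Psi_0&\cdots&\Psi_{N-1}\end{bmatrix},\quad\Psi_t\triangleq\left(\prod_{s=t+1}^{N-1}\Phi_s\right)B_t.
\end{equation}
Collecting the quadratic weights into $\Gb{}\triangleq\operatorname{blkdiag}(G_1,\dots,G_N)\in\posdefmats^{N\nuu}$, the batch cost becomes $\tfrac12\xh{}^TP_0\xh{}+\tfrac12\timestack{\bar \uu}^T\Gb{}\timestack{\bar \uu}$, and the unknown terminal term $\tfrac12 x_N^TP_Nx_N$ is deferred to the reduced-problem coupling via the dynamics above.

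The reduction is the heart of the argument. Because the batch communicates with the remainder of the problem only through $\xh{}$ and $x_N$, and because $\Psi$ maps the high-dimensional $\timestack{\bar \uu}\in\mathbb{R}^{N\nuu}$ into $\mathbb{R}^{\nx}$, only the part of $\timestack{\bar \uu}$ that actually moves $x_N$ can ever matter. I would therefore introduce the reduced control $\uh{}\in\mathbb{R}^{\nr}$ to parametrize the reachable terminal offsets $\operatorname{range}\Psi$, writing $x_N=\Ah{}\xh{}+\Bh{}\uh{}$ with $\Bh{}$ of full column rank and columns spanning $\operatorname{range}\Psi$, so that $\nr=\operatorname{rank}\Psi\le\nx$. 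For each $\uh{}$ the cheapest feasible perturbation is the $\Gb{}$-least-norm solution of $\Psi\timestack{\bar \uu}=\Bh{}\uh{}$; this solution depends on $\uh{}$ but not on $\xh{}$, so substituting it back produces no cross term and leaves exactly
\begin{equation}
\Vh{}\left(\xh{},\uh{}\right)=\tfrac12\xh{}^T\Qh{x}\xh{}+\tfrac12\uh{}^T\Qh{\uu}\uh{},\quad\Qh{x}=P_0,
\end{equation}
with $\Qh{\uu}=\Bh{}^T\left(\Psi\Gb{}^{-1}\Psi^T\right)^{+}\Bh{}\in\posdefmats^{\nr}$ coming from the reduced least-norm problem. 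Matching $\Ah{}$, $\Bh{}$, $\Qh{x}$ and $\Qh{\uu}$ against the claimed expressions~\eqref{eq:Qxh_definition},~\eqref{eq:Ah_def} and~\eqref{eq:Bh_def} then finishes the proof.

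I expect the main obstacle to be the rank-deficient case. When $\Psi$ lacks full row rank---which genuinely occurs for short batches, where $N\nuu<\nx$---the subspace $\operatorname{range}\Psi$ is proper in $\mathbb{R}^{\nx}$, the least-norm solution must be taken with a pseudoinverse, and one has to exhibit $\Bh{}$ and $\Qh{\uu}$ explicitly and verify that $\Qh{\uu}$ is positive definite on the reduced coordinates rather than merely assert their existence. Everything else is essentially bookkeeping with the product operators: it is the clean decoupling delivered by Lemma~\ref{lem:F_G_H_cost} that prevents an $\xh{}$--$\uh{}$ cross term from arising and thereby allows the reduced stage to be written in the stated block-diagonal form.
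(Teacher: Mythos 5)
Your proof is correct, and while the setup (factor the batch with $P_N=0$, parametrize $u_t=K_{t+1}x_t+\ub{t}$, invoke Lemma~\ref{lem:F_G_H_cost}, and read off the closed-loop endpoint map $x_N=\Ah{}x_0+\Sc\timestack{\ub{}}$) coincides exactly with the paper's, the core reduction step is genuinely different. The paper writes down the one-step problem~\eqref{eq:proof:thm:reduce_subprob:dense_prob} in the full perturbation $\timestack{\ub{}}\in\mathbb{R}^{N\nuu}$, forms its Riccati/KKT equations, and then compresses them by an SVD of $\Sc^TP_N\Sc$ and a chain of substitutions ($\Kb{}=\timestack{\inv{\bar Q_{\ub{}}}}U_1Z$, $Z=T\Kh{}$), arriving at the explicit closed forms $\Qh{u}=\Bh{}=\Sc\,\timestack{\inv{\bar Q_{\ub{}}}}\Sc^T$ in~\eqref{eq:Bh_def}. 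You instead perform a partial minimization: for each attainable terminal offset $d=\Psi\timestack{\ub{}}$ you take the $\Gb{}$-least-norm preimage, which is independent of $P_N$ and of $\xh{}$, and obtain $\Qh{u}=\Bh{}^T\bigl(\Psi\Gb{}^{-1}\Psi^T\bigr)^{+}\Bh{}$ for an arbitrary full-column-rank $\Bh{}$ spanning $\range{\Psi}$. The two are consistent -- choosing $\Bh{}=\Sc\,\timestack{\inv{\bar Q_{\ub{}}}}\Sc^T$ in your formula and using $MM^{+}M=M$ recovers $\Qh{u}=\Bh{}$ as in the paper -- but your route buys a cleaner conceptual statement (the reduced weight is the value function of a least-norm problem, positive definite on the $\nr$ reduced coordinates by construction, so the degeneracy the paper relegates to a remark about singular $\Gh{}$ is handled intrinsically), whereas the paper's route buys explicit $\nx\times\nx$ matrices computable by the running recursion of Algorithm~\ref{alg:prel_fact} without forming a pseudoinverse or choosing a basis. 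The only caveat is that a literal ``matching against~\eqref{eq:Bh_def}'' requires you to commit to that specific choice of $\Bh{}$ rather than a generic basis; with that choice made, your argument is a complete and arguably more transparent proof of the theorem.
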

\begin{proof}
Let the MPC problem given on the form~\eqref{eq:sub_problem_dscr} be factored for $P_{N}=0$ using the Riccati factorization given by Algorithm~\ref{alg:factorization}. This gives the feedback $\uu_{0,t} = K_{0,t+1}x_t$ for $t \in \intset{0}{N-1}$ which is optimal if $P_N=0$. It will now be investigated how $u_t$ is affected when $P_N \neq 0$. Let the contribution from the unknown $P_N$ be denoted $\ub{t} \in \mathbb{R}^{\nuu}$, giving the control signal
\begin{equation}
\uu_{t} = K_{0,t+1}x_{t}+\ub{t}, \; t \in \intset{0}{N-1}. \label{eq:control_signal_def}
\end{equation}
Note that $\ub{t}$ is a full $\nuu$ vector, hence there is no loss of generality in this assumption. Using~\eqref{eq:control_signal_def}, the states $\timestack{x}$ along the horizon can be expressed as
\begin{equation}
\timestack{x} = \timestack{A}x_0 + \timestack{B} \timestack{\ub{}},
\end{equation}
with cost function for $P_N = 0$ given by Lemma~\ref{lem:F_G_H_cost}, \ie,
\begin{equation}
\Vb{}\left(x_0,\timestack{u} \right) = \frac{1}{2}x_0^T P_{0} x_0 + \frac{1}{2}\timestack{\ub{}}^T\timestack{\Qb{\ub{}}}\timestack{\ub{}}. \label{eq:proof:thm:reduce_subprob:cost}
\end{equation}
Here $\timestack{\Qb{\ub{}}}$, $\timestack{A}$ and $\timestack{B}$ are given by
\begin{align}
\small
\timestack{\Qb{\ub{}}}&\triangleq \begin{bmatrix}
G_{0,1} \\
 & \ddots \\
 & & G_{0,N}
\end{bmatrix}, \;
\timestack{A} \triangleq \begin{bmatrix}
I \\ A_0 + B_0 K_{0,1} \\ \vdots \\ \prod_{t=0}^N \left(A_t + B_t K_{0,t+1} \right)
\end{bmatrix},  \notag \\
\timestack{B} &\triangleq \begin{bmatrix}
0 & 0 & \hdots & 0\\
B_0 \\
(A_1 + B_1 K_{0,2})B_0 & B_1 & & \vdots \\
\vdots & & \ddots \\
\prod_{t=1}^{N-1} \left(A_t + B_t K_{0,t+1} \right)B_0 & & \hdots & B_{N-1}
\end{bmatrix}.\label{eq:proof:thm:reduce_subprob:QuA}
\end{align}
Now, let $\Ah{}$ and $\Sc$ be the last block rows in $\timestack{A}$ and $\timestack{B}$, respectively. The dynamics from $x_0$ to $x_N$ is then given by
\begin{equation}
x_N = \Ah{} x_0 + \Sc \timestack{\ub{}}, \label{eq:proof:thm:reduce_subprob:batch_dynamics}
\end{equation}
which together with~\eqref{eq:proof:thm:reduce_subprob:cost}, and the fact that $P_N$ is the cost for the final state $x_N$, constitutes a new optimization problem
\begin{equation}
\minimize{\frac{1}{2}x_0^TP_0x_0+\frac{1}{2}\timestack{\ub{}}^T\timestack{\Qb{\ub{}}}\timestack{\ub{}}+\frac{1}{2}x_N^TP_Nx_N}{x_0,\timestack{\ub{}},x_N}{x_0 &= \xh{} \\ x_N &= \Ah{}x_0+\Sc \timestack{\ub{}}.} \label{eq:proof:thm:reduce_subprob:dense_prob}
\end{equation}
This is an MPC problem with prediction horizon $1$ (one step from the initial state to the final state), and can be solved using the Riccati recursion, giving
\begin{align}
\Fb{} &= P_0 + \Ah{}^T P_N \Ah{} \label{eq:proof:thm:reduce_subprob:Fbar}\\
\Gb{} &= \timestack{\Qb{\ub{}}} + \Sc^T P_N \Sc \\
\Hb{} &= \Ah{}^T P_N \Sc \\
\Gb{}\Kb{} &= - \Hb{}^T, \label{eq:proof:thm:reduce_subprob:GK}
\end{align}
where~\eqref{eq:proof:thm:reduce_subprob:GK} can be written
\begin{equation}
\left(\Qb{\ub{}}+ \Sc^T P_N \Sc \right)\Kb{} = -\Sc^T P_N \Ah{}. \label{eq:proof:thm:reduce_subprob:GK_written_out}
\end{equation}

Let $U_1$ be an orthonormal basis of $\range{\Sc^T}$ and let $U_2$ be an orthonormal basis of $\range{\Sc^T}^\perp$ given by the singular value decomposition of $\Sc^T P_N \Sc$, \ie,
\begin{equation}
\Sc^T P_N \Sc = \begin{bmatrix}
U_1 & U_2
\end{bmatrix}\begin{bmatrix}
\Sigma(P_N) & 0 \\ 0 & 0
\end{bmatrix} \begin{bmatrix}
U_1^T \\ U_2 ^T
\end{bmatrix}. \label{eq:ST_svd}
\end{equation}
Then $U = [U_1,\; U_2]$ is an orthonormal basis for $\mathbb{R}^{N \nuu}$, and by using the identity $UU^T = I$~\eqref{eq:proof:thm:reduce_subprob:GK_written_out} can equivalently be written
\begin{align}
&\left( U U^T \Qb{\ub{}} + U_1 \Sigma(P_N) U_1^T \right)\Kb{} =  \begin{bmatrix}
U_1 & U_2
\end{bmatrix} \begin{bmatrix} \Gamma (P_N) \\ 0
\end{bmatrix} \notag \iff \\
&\begin{cases}
\left(U_1^T \timestack{{\bar Q_{\ub{}}}} + \Sigma(P_N) U_1^T \right) \Kb{} = \Gamma(P_N) \\
U_2^T \timestack{{\bar Q_{\ub{}}}} \Kb{} = 0 \iff \Kb{} = \timestack{\inv{\bar Q_{\ub{}}}} U_1 Z, \; Z \in \mathbb{R}^{\nr \times \nr}
\end{cases} \label{eq:proof:thm:reduce_subprob:u1u1T}
\end{align}
where $\nr = \textrm{dim}\range{\Sc^T} \leq \nx$. Here $U_1^TU_1 = I$, $U_1^TU_2 = 0$ and $U_2^T U_2 = I$ was used to reduce the size of the system of equations. Inserting $\Kb{} = \timestack{\inv{\bar Q_{\ub{}}}} U_1 Z$ into~\eqref{eq:proof:thm:reduce_subprob:u1u1T} gives
\begin{align}
&\left(U_1^T \timestack{{\bar Q_{\ub{}}}} +  \Sigma(P_N) U_1^T \right)\timestack{\inv{\bar Q_{\ub{}}}} U_1 Z  = \Gamma(P_N) \\
&\iff \left(I+ \Sigma(P_N) U_1^T \timestack{\inv{\bar Q_{\ub{}}}} U_1 \right) Z = \Gamma(P_N). \label{eq:proof:thm:reduce_subprob:U1Z}
\end{align}
Now multiply \eqref{eq:proof:thm:reduce_subprob:U1Z} with $U_1^T \timestack{\inv{\bar Q_{\ub{}}}}  U_1$ from the left, giving
\begin{equation}
\begin{split}
\left(U_1^T \timestack{\inv{\bar Q_{\ub{}}}}  U_1 + U_1^T \timestack{\inv{\bar Q_{\ub{}}}}  U_1 \Sigma(P_N) U_1^T \timestack{\inv{\bar Q_{\ub{}}}}  U_1 \right) Z = \\U_1^T \timestack{\inv{\bar Q_{\ub{}}}}  U_1 \Gamma(P_N). \label{eq:proof:thm:reduce_subprob:reducedGK}
\end{split}
\end{equation}
Next, choose $T \in \mathbb{R}^{\nr \times \nx}$ with full rank such that $U_1T = \Sc^T$, and let $Z = T \Kh{}$ for some $\Kh{} \in \mathbb{R}^{\nx \times \nx}$. With this choice of $Z$ inserted in~\eqref{eq:proof:thm:reduce_subprob:reducedGK} and multiplying from the left with $T^T$ gives
\begin{equation}
\begin{split}
\left(\Sc \timestack{\inv{\bar Q_{\ub{}}}}  \Sc^T + \Sc \timestack{\inv{\bar Q_{\ub{}}}}  U_1 \Sigma(P_N) U_1^T \timestack{\inv{\bar Q_{\ub{}}}} \Sc^T \right) \Kh{} = \\ \Sc \timestack{\inv{\bar Q_{\ub{}}}}  U_1 \Gamma(P_N). \label{eq:proof:thm:reduce_subprob:SQS}
\end{split}
\end{equation}
Using $\Gamma(P_N) = -T P_N \Ah{}$, the right hand side of~\eqref{eq:proof:thm:reduce_subprob:SQS} can be re-written as 
\begin{equation}
\begin{split}
\Sc \timestack{\inv{\bar Q_{\ub{}}}}  U_1 \Gamma(P_N) = -\Sc \timestack{\inv{\bar Q_{\ub{}}}}  U_1 T P_N \Ah{} = -\Sc \timestack{\inv{\bar Q_{\ub{}}}} \Sc^T P_N \Ah{},
\end{split}
\end{equation}
and by using this expression together with $\Sc^T P_N \Sc = U_1 \Sigma(P_N) U_1^T$,~\eqref{eq:proof:thm:reduce_subprob:SQS} can be written
\begin{equation}
\begin{split}
\left(\Sc \timestack{\inv{\bar Q_{\ub{}}}}  \Sc^T + \Sc \timestack{\inv{\bar Q_{\ub{}}}}  \Sc^T P_N  \Sc \timestack{\inv{\bar Q_{\ub{}}}} \Sc^T \right) \Kh{} = \\ -\Sc \timestack{\inv{\bar Q_{\ub{}}}} \Sc^T  P_N \Ah{}. \label{eq:proof:thm:reduce_subprob:SQSdone}
\end{split}
\end{equation}
By introducing the variables
\begin{align}
\Qh{u} &\triangleq \Sc \timestack{\inv{\bar Q_{\ub{}}}}  \Sc^T  , \quad
\Bh{} \triangleq \Sc \timestack{\inv{\bar Q_{\ub{}}}}  \Sc^T \label{eq:Bh_def} \\
\Gh{} & \triangleq \Qh{u} + \Bh{}^T P_N \Bh{}, \quad
\Hh{} \triangleq \Ah{}^T P_N \Bh{},
\end{align}
the equation in~\eqref{eq:proof:thm:reduce_subprob:SQSdone} can be written as
\begin{equation}
\Gh{} \Kh{} = - \Hh{}^T. \label{eq:proof:thm:reduce_subprob:GhKh}
\end{equation}
Hence, by also defining 
\begin{align}
\Qh{x}  &\triangleq P_0, \quad \Fh{} \triangleq \Fb{}, \label{eq:Qxh_definition}  \\
\Ah{} &\triangleq \prod_{t=0}^{N-1}\left( A_{t}+B_{t}K_{0,t+1} \right) \label{eq:Ah_def}
\end{align}
the equations~\eqref{eq:proof:thm:reduce_subprob:Fbar} to~\eqref{eq:proof:thm:reduce_subprob:GK} can be written as
\begin{align}
\Fh{} &= \Qh{x} + \Ah{}^T P_N \Ah{}\\
\Gh{} &= \Qh{u} + \Bh{}^T P_N \Bh{} \\
\Hh{} &= \Ah{}^T P_N \Bh{} \\
\Gh{} \Kh{} &= -\Hh{}^T
\end{align}
which can be identified as the KKT condition for an MPC problem on the form~\eqref{eq:proof:thm:reduce_subprob:dense_prob}, but with smaller control signal dimension ${\nr \leq \nx}$.

\end{proof}
\begin{remark}
The preliminary $P_N$ can be chosen as any $P_N \succeq 0$, \eg, the infinite horizon LQ-cost. For presentation reasons, the choice $P_N = 0$ is made in the proof of Theorem~\ref{thm:reduce_subprob}.  
\end{remark}
\begin{remark}
If $\Sc^T$ is rank deficient then $U_1\in \mathbb{R}^{N\nuu \times \nr}$ will have $\nr < \nx$ columns. Hence $\Gh{}$ is singular and $\Kh{}$ non-unique in~\eqref{eq:proof:thm:reduce_subprob:GhKh}. Problems of this form has been studied in, \eg,~\cite{axehill08:thesis}.
\end{remark}
\begin{remark}
Even though the problem~\eqref{eq:proof:thm:reduce_subprob:dense_prob} is used in the proof of Theorem~\ref{thm:reduce_subprob}, it is not explicitly used in Algorithm~\ref{alg:prel_fact}.
\end{remark}

The formal validy of the reduction of each subproblem $i \in \intset{0}{p}$ is ensured by Theorem~\ref{thm:reduce_subprob}, while the computational procedure is summarized in Algorithm~\ref{alg:prel_fact}, which is basically a Riccati factorization. Note that the final subproblem $\p$ can be factored exactly directly, since~$P_{N_\p,\p}=Q_{x,N}$ is known. Hence, in that subproblem there is no~$\uh{\p}$ since the subproblem is only dependent on the initial value~$\xh{\p}$.
\begin{algorithm}[h!]
  \caption{Reduction using Riccati factorization} \label{alg:prel_fact}
  \begin{algorithmic}[1]
    \STATE $P_N := 0$
    \STATE $\hat Q_u := 0$
    \STATE $\Vk{N} := I$
    %\STATE $\Bh{} := 0$
    \FOR{$t=N-1,\ldots,0$}
    \STATE $F_{t+1} := Q_{x,t} + A^T_tP_{t+1}A_t$\label{alg:prel_fact:line:F_comp} \\
    \STATE $G_{t+1} := Q_{u,t} + B^T_tP_{t+1}B_t$ \\
    \STATE $H_{t+1} := Q_{x\uu,t} + A^T_tP_{t+1}B_t$\label{alg:prel_fact:line:H_comp} \\
    \STATE Compute and store a factorization of $G_{t+1}$.\label{alg:prel_fact:line:factorize_G}
    \STATE Compute a solution $K_{t+1}$ to \\ 
    $G_{t+1}K_{t+1} = -H^T_{t+1}$ \\
    \STATE Compute a solution $\Mm{t+1}$ to \\ 
    $G_{t+1}\Mm{t+1} = -B_t^T \Vk{t+1}$ \\
    %\STATE $\Bh{} := \Bh{} + \Vk{t+1}^T B_{t}^T \Mm{t+1}$
    \STATE $\Vk{t} := \left( A_t^T + K_{t+1}^T B_t^T \right)\Vk{t+1}$
    \STATE $P_{t} := F_{t+1} - K^T_{t+1}G_{t+1}K_{t+1}$\label{alg:prel_fact:line:P_comp}
    \STATE $\hat Q_u := \hat Q_u + \Mm{t+1}^T G_{t+1} \Mm{t+1}$
    \ENDFOR
    \STATE $\Ah{} := \Vk{0}^T$
    \STATE $\Bh{} := \Qh{u}$
  \end{algorithmic}
\end{algorithm}
%

%%%%%%%%%%     Reduced MPC problem   %%%%%%%

\subsection{Constructing the reduced MPC problem}
\label{subsec:red_mpc_prob}
All subproblems $i \in \intset{0}{p}$ can be condensed to depend only on the initial value $\xh{i}$ and $\uh{i}$ according to Theorem~\ref{thm:reduce_subprob} and Section~\ref{subsec:red_sub_prob}. The variable $\uh{i}$ represents the unknown part of the control signals $u_{t,i}$ that are due to the initially unknown $P_{N_i,i}$ and can be interpreted as a new control signal for batch~$i$. The condensed subproblems can be combined into an optimization problem equivalent to the original equality constrained MPC problem~\eqref{eq:org_eqc_problem}, but with prediction horizon~$\p < N$ and control signal dimension $\nr$, \ie,
\begin{equation}
\minimize{\frac{1}{2}\sum_{i=0}^{\p-1}\begin{bmatrix}
    \xh{i}^T & \uh{i}^T
    \end{bmatrix} \Qh{i}\begin{bmatrix}
    \xh{i} \\  \uh{i}
\end{bmatrix} + \frac{1}{2} \xh{\p}^T \Qxh{\p} \xh{\p}}{\timestack{\xh},\timestack{\uh}}{\xh{0}&=\bar x_0 \\ \xh{i+1} &= \Ah{i} \xh{i} + \Bh{i} \uh{i}, \; i \in \intset{0}{\p-1}.}
\label{eq:red_mpc_problem}
\end{equation}
This problem is on the same form as~\eqref{eq:org_eqc_problem} but the number of unknowns are reduced. The dynamics equations $\xh{i+1}=\Ah{i}\xh{i} + \Bh{i} \uh{i}$ are due to the fact that $x_{N_i,i} = x_{0,i+1}$ per definition in the splitting of the time horizon in Section~\ref{subsec:sub_prob}.  Hence, an MPC problem $\MPC{N}$ of prediction horizon length $N$ can be reduced, by using Riccati factorizations in each subproblem, to an MPC problem $\MPC{\p}$ on the same form but with shorter prediction horizon and lower control signal dimension. Fig.~\ref{fig:mpc_red_struct} illustrates this reduction procedure, where the notation $\MPCsub{i}{}$ is introduced to denote subproblem $i$ given by~\eqref{eq:sub_problem_dscr}, with prediction horizon $N_i$.

To solve the original problem, \ie solving all subproblems, the reduced problem $\MPC{\p}$ is solved using the Riccati recursion to obtain the optimal $\Ph{i}$ and $\xh{i}$ for $i \in \intset{0}{\p}$. Thereafter the subproblems are solved with $P_{N_i,i} = \Ph{i+1}$ for $i \in \intset{0}{\p-1}$ and $x_{0,i} = \xh{i}$ for $i\in \intset{0}{\p}$ using Algorithms~\ref{alg:factorization} to~\ref{alg:fwd_rec_dual}. 
\begin{figure}
\centering
\def\svgwidth{0.9\columnwidth}
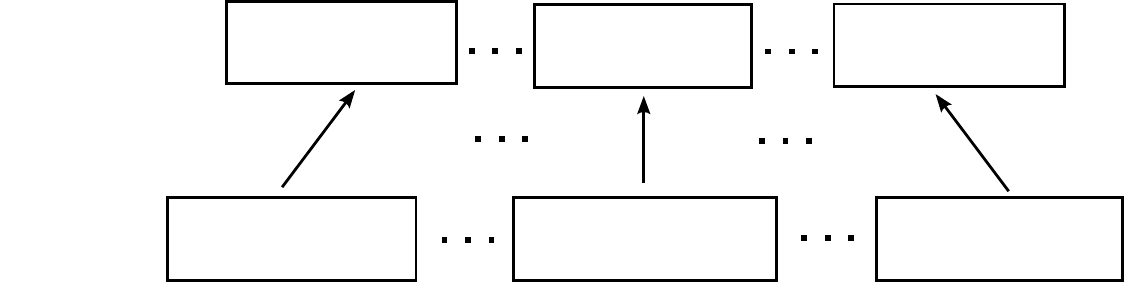
\caption{The original MPC problem $\MPC{N}$ can be reduced to a smaller problem $\MPC{\p}$ with the same structure but with shorter prediction horizon.}
\label{fig:mpc_red_struct}
\end{figure}

%%%%%%%%%%%%%%%%%%%%%%%       PARALLEL RICCATI RECURSION   %%%%%%%%%%%%%%
\section{Parallel Riccati Recursion}
\label{sec:par_ric}
The reduced problem~\eqref{eq:red_mpc_problem} can be reduced repeatedly using the theory presented in Section~\ref{sec:prob_red} until a smaller MPC problem with desired length of the prediction horizon is obtained. This structure is similar to what was made in~\cite{nielsen14_parallel_mpc_arxive}, but one of the differences here is the way it is performed using Riccati recursions which allows for a complete exploitation of structure. This procedure is depicted in Fig.~\ref{fig:arb_tree_struct}, where $\MPCsub{i}{k}$ denotes subproblem $i$ on level $k$ in the reduction tree. Hence, the tree structure is built in parallel.

When the top problem $\MPC{\p_{m-1}}$ is solved, the solution can be propagated to its children $\MPCsub{i}{m-1}$ for ${i \in \intset{0}{\p_{m-1}}}$. Each subproblem receives $\Ph{i+1}$ and $\xh{i}$ from its parent and as soon as these are known to the subproblem, it can be solved independently from the other subproblems at the same level of the tree. This procedure consists of two steps; reducing the original problem $\MPC{N}$ in parallel to $\MPC{\p_{m-1}}$, and then propagating the solution of $\MPC{\p_{m-1}}$ down in the tree. These main steps are summarized in Algorithm~\ref{alg:build_tree} and~\ref{alg:propagate_solution}. Since all levels can be solved in parallel using the Riccati recursion, and the result at the bottom level is identical to if a serial Riccati recursion was used to solve $\MPC{N}$, the Riccati recursion has been parallelized.
\begin{figure}[htb]
\centering
\def\svgwidth{\columnwidth}
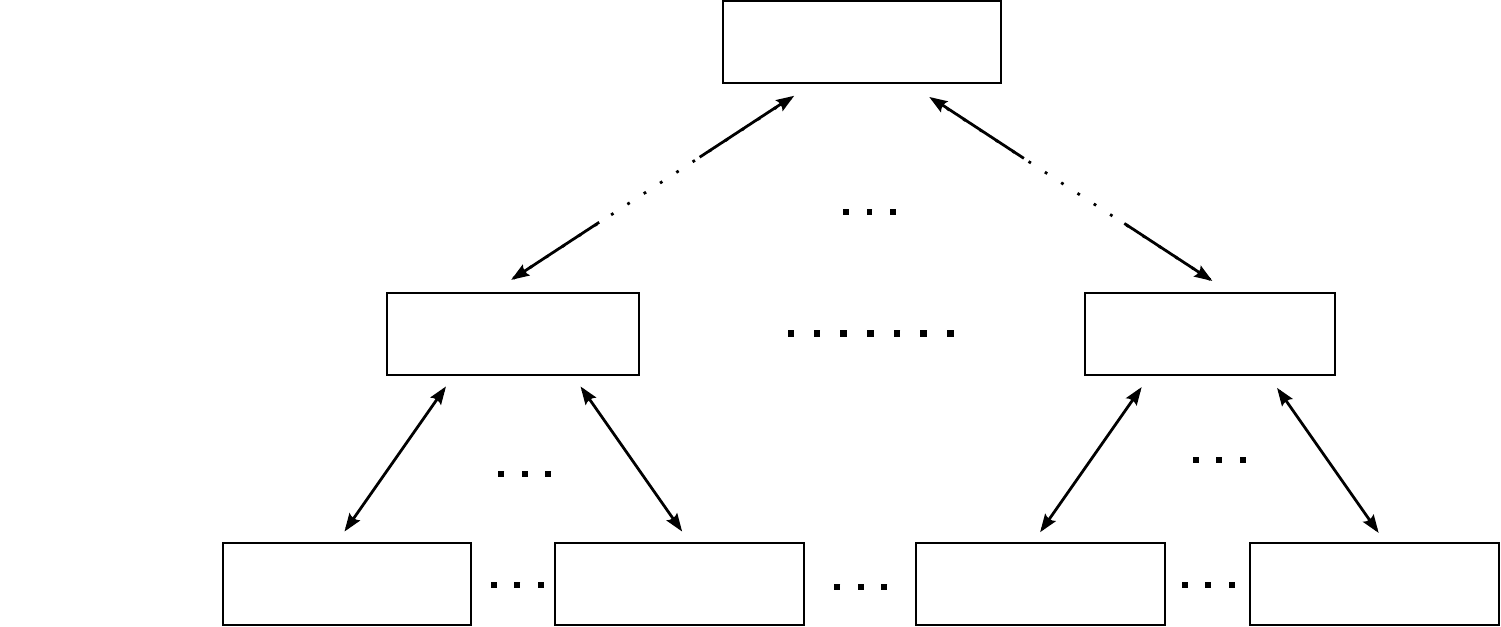
\caption{The original MPC problem $\MPC{N}$ can be reduced repeatedly in several steps using Riccati factorizations on the way up in the tree. When the top problem has been reached and solved, the solution can be propagated back down in the tree until the bottom level is solved.}
\label{fig:arb_tree_struct}
\end{figure}

%%%%%%%%%%%   ALGORITHMS    %%%%%%%%%%%%%

\subsection{Algorithms for parallel Riccati recursion}
\label{subsec:algs}
In this section algorithms for computing the Riccati recursion in parallel are presented. Beyond what is presented here, as observed already in~\cite{axehill:_towar_mpc}, standard parallel linear algebra can be used in many computations in the serial Riccati recursion to boost performance even further. This has however not been utilized in this work. 

In Algorithm~\ref{alg:build_tree}, the original problem is reduced in parallel in several steps to an MPC problem with prediction horizon~$\p_{\min}$. Assume, for simplicity, that all subproblems are of equal length $N_s$ and that $N=N_s^m$ for some $1 < m \in \intnums$. Then this reduction can be made in $m$ steps, provided that $ N/N_s=m-1$ processing units are available. Hence, the reduction algorithm has $\Ordo{\log N}$ complexity growth. 
\begin{algorithm}[h!]
  \caption{Parallel reduction of MPC problem} \label{alg:build_tree}
  \begin{algorithmic}[1]
  	\STATE Initiate level counter $k:=0$
  	\STATE Initiate the first number of subsystems $p_{-1} = N$
  	\STATE Set the minimal number of sub problems $p_{\min}$
    \WHILE{$p_k > p_{\min}$}
    	\STATE Compute desired $p_k$ to define the number of sub problems (with $p_k < p_{k-1}$)
    	\STATE Split the prediction horizon $0,\ldots, p_{k-1}$ in $p_k+1$ segments
        $0,\ldots, N_0^k$ up to $0,\ldots, N_{p_k}^k$
        \STATE Create sub problems $i=0,\ldots,p_k$ for each time batch according to Section~\ref{subsec:sub_prob}
   	    \STATE \textbf{parfor} $i = 0,\ldots,p_k$ \textbf{do} \label{alg:line:build_tree_loop}
        	\STATE \hspace{2ex} Reduce subproblem $i$ according to Algorithm~\ref{alg:prel_fact}
      		\STATE \hspace{2ex} Propagate $\Ah{i}$, $\Bh{i}$, $\Qxh{i}$ and $\Quh{i}$ to next level
     	\STATE \textbf{end parfor} \label{alg:line:build_tree_loop_end}
     	\STATE Update level counter $k:= k + 1$
    \ENDWHILE
    \STATE Compute maximum level number $k:= k-1$ 
  \end{algorithmic}
\end{algorithm}

In Algorithm~\ref{alg:propagate_solution} the solution (\ie $\xh{i}^m$ and $\hat P_{i}^m$ for $i \in \intset{0}{\p_m-1}$) to the problem $\MPC{\p_{m-1}}$ in the tree structure in Fig.~\ref{fig:arb_tree_struct} is propagated down in the tree to the leaves $\MPCsub{i}{0}$, $i \in \intset{0}{\p_0}$. All subproblems can be solved using only information from their parents, and hence each level in the tree can be solved completely in parallel. The propagation of the solution from the top level to the bottom level can thus be made in $m$ steps provided that $m-1$ processing units are available. Since both Algorithm~\ref{alg:build_tree} and~\ref{alg:propagate_solution} are solved in $\Ordo{\log N}$ complexity, the solution to the equality constrained MPC problem~\eqref{eq:org_eqc_problem} can be computed in $\Ordo{\log N}$ complexity growth. The solution to the original inequality constrained problem~\eqref{eq:min_problem} is obtained by solving a sequence of problems of the form in~\eqref{eq:org_eqc_problem}. Since the length of this sequence is independent of whether~\eqref{eq:org_eqc_problem} is solved serially or in parallel, the performance gain obtained by this work is directly transferred to the overall solution time.
\begin{algorithm}
  \caption{Parallel propagation of solution} \label{alg:propagate_solution}
  \begin{algorithmic}[1]
  	\STATE Initialize the first parameter as $\bar x_0$
  	\STATE Get level counter $k$ from Algorithm~\ref{alg:build_tree}
    \WHILE{$k \geq 0$}
	    \STATE \textbf{parfor} {$i=0,\ldots,p_k$} \textbf{do}\label{alg:line:propagate_solution_loop}
    		\STATE \hspace{2ex} Compute the full factorization and the primal \\ \hspace{3ex}variables according to Algorithm~\ref{alg:factorization} and~\ref{alg:fwd_rec} \label{alg:propagate_solution:line:compute_factorization}
    	\STATE \textbf{end parfor} \label{alg:line:propagate_solution_loop_end}
    	\IF{k==0}
	    \STATE \textbf{parfor} {$i=0,\ldots,p_0$} \textbf{do}\label{alg:line:propagate_solution_dual_loop}
    		\STATE \hspace{2ex} Compute the dual variables corresponding to \\ \hspace{2ex} equality and inequality constraints using \\ \hspace{2ex} Algorithm~\ref{alg:fwd_rec} and~\ref{alg:fwd_rec_dual}
    	\STATE \textbf{end parfor} \label{alg:line:propagate_solution_loop_dual_end}
    	\ENDIF
   		\STATE Update level counter $k:=k-1$
    \ENDWHILE
  \end{algorithmic}
\end{algorithm}

%%%%%%%%%%%     NUMERICAL RESULTS       %%%%%%%%

\subsection{Numerical results}
\label{subsec:num_res}
The algorithms presented in Section~\ref{subsec:algs} have been implemented in \textsc{Matlab}. The algorithms are implemented serially and run using only one computational thread, but the information flow is done in the same way as for a fully parallel implementation. The computation time for a truly parallel implementation has been computed by summing over the maximum computation time for each level in the tree. This estimate does not take the communication latencies into account, but these are assumed to be negligible in comparison to the actual computations. The performance of the parallel Riccati algorithm in this work is compared with the serial Riccati recursion, which is considered a state-of-the-art serial method.

The computation times when computing the Newton step for random MPC problems for stable LTI systems of order $\nx = 7$, $\nuu = 5$ and using $N_s = 2$ are presented in Fig.~\ref{fig:num_res}. The computation times are averaged over $10$ random systems of the same order. The dash-dotted line is the computation times for the serial Riccati recursion and the solid line is the new parallel Riccati recursion algorithm. The result is plotted in a log-log scale to compare the complexity growth. For prediction horizons larger than ${N  \approx 16}$ the parallel Riccati recursion outperforms the serial one.
\begin{figure}[htb!]
\centering
\includegraphics[width=0.85\columnwidth]{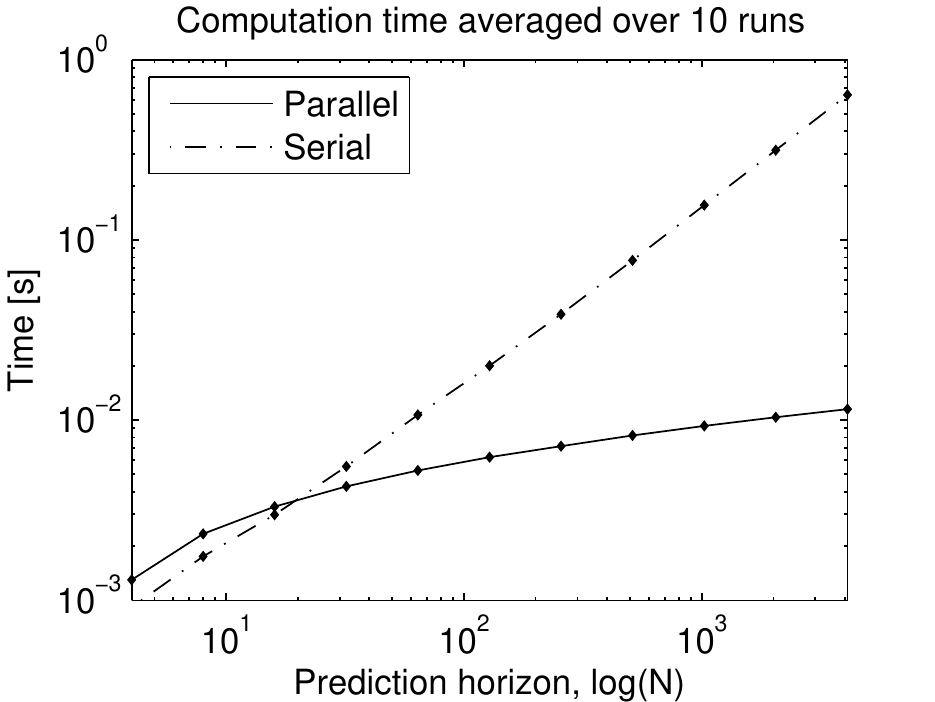}
\caption{Computation times for the parallel Riccati recursion (solid) and for the serial (dash-dotted) when computing Newton steps for random MPC problems with $\nx=7$, $\nuu = 5$ and $N_s = 2$. The parallel Riccati recursion outperforms the serial one for $N \gtrsim 16$.}
\label{fig:num_res}
\end{figure}
In Fig.~\ref{fig:num_res_Ns_3} the computation times for systems of the same order as in Fig.~\ref{fig:num_res} but with $N_s=3$ has been plotted. Here the parallel method outperforms the serial one for $N \geq 9$. How to choose the length $N_s$ of the batches to obtain the lowest possible computation time is not investigated here. However, similar to what is described in~\cite{axehill13_controlling_sparisty_arxive}, the optimal choice depends on, \eg, the problem and the hardware which the algorithm is implemented on.
\begin{figure}[htb!]
\centering
\includegraphics[width=0.85\columnwidth]{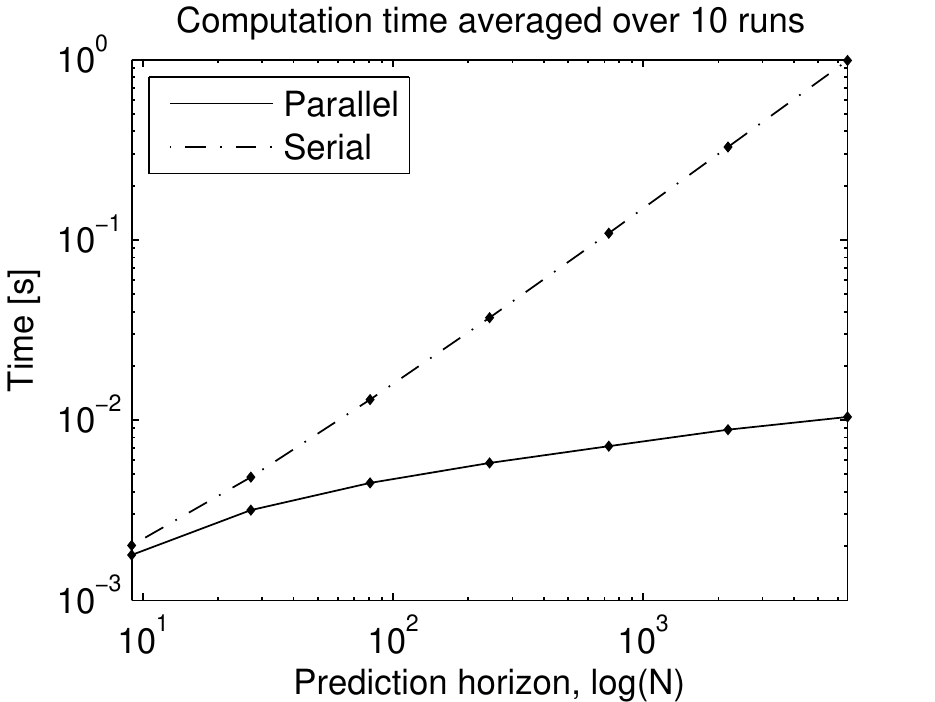}
\caption{Computation times for the parallel Riccati recursion (solid) and for the serial (dash-dotted) when computing Newton steps for random MPC problems with $\nx=7$, $\nuu = 5$ and $N_s = 3$. The parallel Riccati recursion outperforms the serial one for $N \geq 9$.}
\label{fig:num_res_Ns_3}
\end{figure}

The simulations were performed on an 
Intel Xeon CPU X5675 @ 3.07 GHz running Linux (version 2.6.32-431.5.1.el6.x86\_64) and \textsc{Matlab} (8.0.0.783 (R2012b)).

%%%%%%%%%%       CONCLUSIONS      %%%%%%%%%%
\section{Conclusions}
\label{sec:conclusion}
This work introduces theory and algorithms for parallelization of the Riccati recursion. It is shown that the Newton step corresponding to an equality constrained MPC problem can be solved directly (non-iteratively) in parallel using Riccati recursions that fully exploit the structure from the MPC problem.  The algorithms have been implemented in \textsc{Matlab} and have been used to compute the Newton step for random MPC problems with stable LTI systems as a proof of concept that the theory works in practice, and to compare performance with a serial state-of-the-art Riccati algorithm. The resulting parallel algorithm has a complexity growth as low as $\Ordo{\log N}$, where $N$ is the length of the prediction horizon. For future work the structure in the updates of the feedback gain $K_t$ will be investigated further to improve performance even more. 

%%%%%%%%%%%%%%%%%      APPENDIX     %%%%%%%%%

\appendix
\subsection{Proof of Lemma~\ref{lem:F_G_H_cost}}
\label{app:proof:lem:F_G_H_cost}
Assume that~\eqref{eq:lem:F_G_H_cost:cost} holds for an arbitrary $\bar t + 1 \in \intset{1}{N-1}$. Then, the cost at $t= \thh$ is given by
\begin{equation}
\frac{1}{2}\begin{bmatrix}
x_{\thh}^T & \uu_{\thh}^T
\end{bmatrix} \begin{bmatrix}
Q_{x,\thh} & Q_{x\uu,\thh} \\
Q_{x\uu,\thh}^T & Q_{\uu,\thh}
\end{bmatrix}\begin{bmatrix}
x_{\thh} \\ \uu_{\thh}
\end{bmatrix}+\Vb{}\left(x_{\thh+1},\timestack{\bar \uu}\right). \label{eq:proof:lem:F_G_H_cost:cost}
\end{equation}
By inserting $x_{\thh+1}=A_{\thh}x_{\thh}+B_{\thh}\uu_{\thh}$ into~\eqref{eq:proof:lem:F_G_H_cost:cost}, the cost can be written
\begin{equation}
\frac{1}{2}\begin{bmatrix}
x_{\thh}^T & \uu_{\thh}^T
\end{bmatrix} \begin{bmatrix}
F_{\thh+1} & H_{\thh+1} \\
H_{\thh+1}^T & G_{\thh+1}
\end{bmatrix}\begin{bmatrix}
x_{\thh} \\ \uu_{\thh}
\end{bmatrix}+\frac{1}{2}\sum_{t=\thh+1}^{N-1} \ub{t}^T G_{t+1} \ub{t}, \label{eq:proof:lem:costfcn}
\end{equation}
where $F_{\thh+1}$, $H_{\thh+1}$ and $G_{\thh+1}$ are given by the Riccati recursion. Finally, using the control law $u_{\thh} = K_{\thh+1}x_{\thh}+\ub{\thh}$ and the definition of $F_{\thh+1}$, $H_{\thh+1}$ and $G_{\thh+1}$ gives the cost function
\begin{equation}
\Vb{}\left(x_{\thh},\timestack{\ub{}}\right) = \frac{1}{2}x_{\thh}^T P_{\thh} x_{\thh} + \frac{1}{2}\sum_{t=\thh}^{N-1} \ub{t}^T G_{t+1} \ub{t}.
\end{equation}
Note that the cross terms between $x_{\thh}$ and $\ub{\thh}$ in the cost function~\eqref{eq:proof:lem:costfcn} vanishes since $G_{\thh+1}K_{\thh+1}=-H_{\thh+1}^T$.
Equation~\eqref{eq:lem:F_G_H_cost:cost} holds specifically for $t=N-1$ when $P_N = 0$, and hence Lemma~\ref{lem:F_G_H_cost} follows by mathematical induction.

%%%%%%%%%%%%%%%%       
\bibliography{axe_abrv,IEEEabrv,ianFull}

\end{document}